\def\@author#1{\g@addto@macro\elsauthors{\normalsize%

    \def\baselinestretch{1}%
    \upshape\authorsep#1\unskip\textsuperscript{%
      \ifx\@fnmark\@empty\else\unskip\sep\@fnmark\let\sep=,\fi
      \ifx\@corref\@empty\else\unskip\sep\@corref\let\sep=,\fi
      }%
    \def\authorsep{\unskip,\space}%
    \global\let\@fnmark\@empty
    \global\let\@corref\@empty  
    \global\let\sep\@empty}%
    \@eadauthor={#1}
}
\patchcmd{\ps@pprintTitle}{\footnotesize\itshape
       Preprint submitted to \ifx\@journal\@empty Elsevier
       \else\@journal\fi\hfill\today}{\relax}{}{}
\theoremstyle{plain}
\newtheorem{theorem}{Theorem}
\newtheorem{lemma}[theorem]{Lemma}
\newtheorem{proposition}[theorem]{Proposition}
\newtheorem{conjecture}[theorem]{Conjecture}
\newtheorem{remark}[theorem]{Remark}
\newtheorem{question}[theorem]{Question}
\newtheorem{example}[theorem]{Example}
\newproof{pop1}{Proof of Proposition \ref{diamonds}}
\newproof{pop2}{Proof of Proposition \ref{lemseidel}}
\newproof{pot3}{Proof of Theorem \ref{main2}}
\newproof{pot4}{Proof of Proposition \ref{cmij}}
\begin{document}    
    \title{Matricial characterization of tournaments with maximum number of diamonds}
    
    \author[A]{Wiam Belkouche}
    \ead{belkouche.wiam@gmail.com}
    
    \author[A]{Abderrahim Boussa\"{\i}ri\corref{cor1}}  
    \cortext[cor1]{Corresponding author}
    \ead{aboussairi@hotmail.com}
    
    \author[A]{Soufiane Lakhlifi}
    \ead{s.lakhlifi1@gmail.com}
    
    \author[A]{Mohamed Zaidi}
    \ead{zaidi.fsac@gmail.com}

\address[A]{Facult\'e des Sciences A\"in Chock,  D\'epartement de Math\'ematiques et Informatique, Laboratoire de Topologie, Alg\`ebre, G\'eom\'etrie et Math\'ematiques Discr\`etes, Universit\'e Hassan II

Km 8 route d'El Jadida,
BP 5366 Maarif, Casablanca, Maroc}
        \begin{frontmatter}
        \begin{abstract}
             A \emph{diamond} is a $4$-tournament which consists of a vertex dominating or dominated by a $3$-cycle. 
						Assuming the existence of skew-conference matrices, we give a complete characterization of $n$-tournaments with the
						maximum number of diamonds when $n\equiv0\pmod{4}$ and $n\equiv3\pmod{4}$.
						For $n\equiv2\pmod{4}$, we obtain an upper bound on the number of diamonds
						in an $n$-tournament and we give a matricial characterization of
						tournaments achieving this bound.       
        \end{abstract}
        \begin{keyword}
            Tournaments, Diamonds, Skew-conference matrices, EW-matrices, Spectrum. 

        \end{keyword}

     \end{frontmatter}
\section{Introduction}
One of the most important problems in Extremal Combinatorics is to determine
the largest or the smallest possible number of copies of a given object in a
finite combinatorial structure. We address
this problem in the case of tournaments. Throughout this paper, we mean by an 
\emph{$n$-tournament}, a tournament with $n$ vertices. It is easy to see that, up to
isomorphy, there are four distinct $4$-tournaments. The two that contain a
single $3$-cycle are called \emph{diamonds} \cite{bouchaala04, habib07, ille92}. They consist of a vertex
dominating or dominated by a $3$-cycle. The class of tournaments without
diamonds was characterized by Moon \cite{moon79}. These tournaments appear in
the literature under the names \emph{local orders} \cite{cameron81},\emph{
locally transitive tournaments} \cite{lachlan84} or \emph{vortex-free
tournaments} \cite{Knuth92}. For $n\geq 9$, Bouchaala \cite{bouchaala04}
proved that the number $\delta_{T}$ of diamonds  in an $n$-tournament is either
$0$, $n-3$, $2n-8$ or at least $2n-6$. 
In another side, motivated by geometric considerations, Leader and Tan \cite{leader10} proved 
that $\delta_T$ is at most $\frac{1}{4}\binom{n}{4}+O(n^3)$.
Moreover, by a probabilistic method, they showed
that there is an $n$-tournament with at least $\frac{1}{4}\binom{n}{4}$ diamonds.
To find the Tur\'{a}n density of a particular $4$-uniform hypergraphs,
Baber \citep{semeraro17} associates with each tournament $T=(V,A)$, the $4$-uniform hypergraph 
$\mathcal{H}_T$ on $V$ whose hyperedges correspond to subsets of $V$ which 
induce a diamond in $T$. In this hypergraph, every $5$-subset contains
either $0$ or $2$ hyperedges. Recently, using a combinatorial argument due to de Caen
\cite{decaen83}, Gunderson and Semeraro \cite{semeraro17} proved that an $r$-uniform
hypergraph in which every $(r+1)$-subset contains at most $2$ hyperedges has at most
$\frac{n}{r^2}\binom{n}{r-1}$ hyperedges, in particular,
an $n$-tournament contains at most $\frac{n}{16}\binom{n}{3}$ diamonds. Moreover, using Paley tournaments, they showed that this bound
is reached if $n=q+1$ for some prime power $q\equiv3\pmod{4}$. 

In this paper, we study the tournaments with the maximum number of diamonds. Our work is closely related to
the existence of D-optimal designs. More precisely, assuming the existence of skew-conference
matrices, we give a complete characterization of $n$-tournaments with the
maximum number of diamonds when $n\equiv0\pmod{4}$ and $n\equiv3\pmod{4}$.
For $n\equiv2\pmod{4}$, we obtain an upper bound on the number $\delta_T$
in an $n$-tournament. Moreover, we give a matricial characterization of
tournaments achieving this bound.  

\section{Number of diamonds and $3$-cycles in tournaments}
Throughout this paper, all matrices are from the set $\{0,\pm1\}$, unless otherwise noted. The identity matrix of order $n$ is denoted by $I_{n}$ and the all ones matrix is denoted by $J_n$. The absolute value $\left \vert M\right \vert $, of a matrix $M$, is obtained
from $M$ by replacing each entry of $M$ by its absolute value. Two matrices $A$ and $B$ are $\{\pm 1\}$-\emph{diagonally similar} if $B=DAD$ for some $\pm1$-diagonal matrix. 
 
Let $T$ be an $n$-tournament. With respect to a labelling, the \emph{adjacency matrix} of $T$
is the $n\times n$ matrix $A=(a_{ij})$ in which $a_{ij}$ is $1$ if $i$ dominates $j$ and $0$ otherwise. The
\emph{Seidel adjacency matrix} of $T$ is $S=A-A^{t}$ where
$A^{t}$ is the transpose of $A$. Remark that, with respect to different
labellings, the Seidel adjacency matrices of a tournament are permutationally similar.

\begin{remark}\label{detdiamond}
  The determinant of the Seidel adjacency matrix of a $4$-tournament is $9$ if it is a diamond and $1$ otherwise.
\end{remark}

The following lemma is crucial in our study.

\begin{lemma}
\label{diam principa minors}Let $T$ be an $n$-tournament and
 let $S$ be its Seidel adjacency matrix. Then the sum of all $4\times4$ principal
minors of $S$ is $8\cdot\delta_{T}+\binom{n}{4}$.
\end{lemma}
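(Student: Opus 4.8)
The plan is to reduce the statement to a subset-by-subset count using Remark~\ref{detdiamond}. The crucial observation is that every $4\times 4$ principal minor of $S$ is the determinant of the Seidel adjacency matrix of an induced $4$-subtournament of $T$, and the value of that determinant is already pinned down by the remark.

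First I would fix a labelling $\{1,\dots,n\}$ of the vertices of $T$, so that $S=(s_{ij})$ is a concrete $n\times n$ matrix, and for a $4$-subset $X\subseteq\{1,\dots,n\}$ I would denote by $S[X]$ the principal submatrix of $S$ on the rows and columns indexed by $X$. Straight from the definition of the Seidel adjacency matrix, $S[X]$ is exactly the Seidel adjacency matrix of the sub-tournament $T[X]$ induced on $X$ with the inherited labelling; hence the $4\times4$ principal minor of $S$ attached to $X$ equals $\det S[X]$. Since permutation similarity preserves the determinant, this value does not depend on the order in which the elements of $X$ are listed, so there is no ambiguity in speaking of ``the'' principal minor associated with $X$.

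Next I would invoke Remark~\ref{detdiamond}: $\det S[X]=9$ when $T[X]$ is a diamond and $\det S[X]=1$ otherwise. Summing over all $\binom{n}{4}$ choices of $X$, the $\delta_T$ diamonds contribute $9\delta_T$ in total while the remaining $\binom{n}{4}-\delta_T$ subsets contribute $1$ each, giving
\[
\sum_{X}\det S[X]\;=\;9\delta_T+\Bigl(\binom{n}{4}-\delta_T\Bigr)\;=\;8\delta_T+\binom{n}{4}.
\]

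I do not expect a genuine obstacle here; the only point that needs a word of justification is the identification of $S[X]$ with the Seidel adjacency matrix of $T[X]$, which is immediate from the definitions. An alternative but less economical route would be to recognize this sum as the fourth elementary symmetric function of the eigenvalues of $S$, equivalently (up to sign) a coefficient of its characteristic polynomial, and then evaluate it; but this would still ultimately depend on Remark~\ref{detdiamond} (or on directly computing $\det S[X]$ over the four isomorphism types of $4$-tournaments), so the counting argument above is preferable.
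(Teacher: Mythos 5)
Your proof is correct and follows essentially the same route as the paper: identify each $4\times4$ principal minor with the determinant of the Seidel adjacency matrix of the induced $4$-subtournament, apply Remark~\ref{detdiamond}, and sum over the $\binom{n}{4}$ subsets. Your version merely spells out the identification of $S[X]$ with the Seidel matrix of $T[X]$, which the paper leaves implicit.
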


\begin{proof}
The number of $4\times4$ principal minors of $S$ is $\binom{n}{4}$. It follows
from Remark \ref{detdiamond} that the sum of all $4\times4$ principal minors of $S$ is 
$9\cdot\delta_{T}+(\binom{n}{4}-\delta_{T})=$ $8\cdot\delta_{T}+\binom{n}{4}$.
\end{proof}

Let $M$ be an $n\times n$ complex matrix and let $P_{M}(x):=\det(xI-M)=x^{n}%
+\sigma_{1}x^{n-1}+\cdots+\sigma_{n-1}x+\sigma_{n}$ be its
characteristic polynomial, then%

\begin{equation}
\sigma_{h}=(-1)^{h}\sum(\text{all }h\times h\text{ principal minors})\text{ }
\label{eq01}%
\end{equation}

When $M$ is a real skew-symmetric matrix, its nonzero eigenvalues are all
purely imaginary and come in complex conjugate pairs $\pm i\lambda_{1}%
,\ldots,\pm i\lambda_{k}$, where $\lambda_{1},\ldots,\lambda_{k}$ are real
positive numbers. Equivalently, the characteristic polynomial of $M$ has the form

\[
P_{M}(x)=x^{l}(x^{2}+\lambda_{1}^{2})(x^{2}+\lambda_{2}^{2})\cdots
(x^{2}+\lambda_{k}^{2})
\]

where $l+2k=n$.

Assume now that $M$ is skew-symmetric and all its off-diagonal entries are
from the set $\{-1,1\}$. Such matrix is sometimes known as a \emph{skew-symmetric Seidel matrix}.  By using \cite[Proposition~1]{mccarthy96},
$\det(S)=0$ if and only if $n$ is odd. Then, if $n$ is even, $l=0$ and
\[
P_{M}(x)=(x^{2}+\lambda_{1}^{2})(x^{2}+\lambda_{2}^{2})\cdots(x^{2}%
+\lambda_{n/2}^{2})
\]
If $n$ is odd, then by using \cite[Proposition~1]{mccarthy96} again, any
$(n-1)\times(n-1)$-principal minor is nonzero and thus, the multiplicity of
the eigenvalue $0$ is $1$. It follows that%

\[
P_{M}(x)=x(x^{2}+\lambda_{1}^{2})(x^{2}+\lambda_{2}^{2})\cdots(x^{2}%
+\lambda_{(n-1)/2}^{2})
\]

A useful formula of the number of diamonds is given in the following proposition.

\begin{proposition}\label{sumentries}
	Let $T$ be an $n$-tournament and let $S$ be its Seidel adjacency matrix. Then,
			\[
					\delta_{T}  = \frac{1}{96}n^2(n-1)(n-2) - \frac{1}{16}\sum_{i<j}{m_{ij}^2}
			\]
	where $m_{ij}$ are the entries of $S^2$.
\end{proposition}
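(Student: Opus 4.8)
The plan is to combine Lemma \ref{diam principa minors} with the Newton-type identity \eqref{eq01} applied to the skew-symmetric matrix $S$. Since $S$ is a skew-symmetric Seidel matrix, all its odd-order principal minors vanish (each such minor is the determinant of an odd-order skew-symmetric matrix), so $\sigma_1 = \sigma_3 = 0$, and by \eqref{eq01} the sum of all $4\times 4$ principal minors equals $\sigma_4$. Thus Lemma \ref{diam principa minors} gives $\sigma_4 = 8\delta_T + \binom{n}{4}$, and the task reduces to expressing $\sigma_4$ in terms of the entries of $S^2$.

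Next I would compute $\sigma_4$ from the power sums of the eigenvalues of $S$ via Newton's identities. Writing $p_k = \operatorname{tr}(S^k)$, we have $p_1 = 0$ (trace of $S$), $p_2 = \operatorname{tr}(S^2) = -\sum_{i,j} a_{ij}^2$-type quantity; more precisely, since the off-diagonal entries of $S$ are $\pm 1$ and the diagonal is zero, $\operatorname{tr}(S^2) = -n(n-1)$. Also $p_3 = \operatorname{tr}(S^3) = 0$ because the eigenvalues come in pairs $\pm i\lambda_j$ (equivalently, $S^3$ is skew-symmetric with zero diagonal). Newton's identities with $p_1 = p_3 = 0$ then yield $\sigma_1 = 0$, $\sigma_2 = -\tfrac12 p_2$, $\sigma_3 = 0$, and $\sigma_4 = \tfrac18 p_2^2 - \tfrac14 p_4 = \tfrac12 \sigma_2^2 - \tfrac14 p_4$. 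So I need $p_4 = \operatorname{tr}(S^4)$ in terms of the $m_{ij}$. Since $S^4 = (S^2)(S^2)$ and $S^2$ is symmetric, $\operatorname{tr}(S^4) = \sum_{i,j} m_{ij}^2 = \sum_i m_{ii}^2 + 2\sum_{i<j} m_{ij}^2$. The diagonal entries $m_{ii}$ of $S^2$ equal $-(n-1)$ (each vertex contributes $\pm 1$ squared, $n-1$ times, with the sign from skew-symmetry giving $-(n-1)$), so $\sum_i m_{ii}^2 = n(n-1)^2$.

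Assembling: $\sigma_2 = -\tfrac12 p_2 = \tfrac12 n(n-1)$, hence $\tfrac12\sigma_2^2 = \tfrac18 n^2(n-1)^2$; and $\tfrac14 p_4 = \tfrac14\big(n(n-1)^2 + 2\sum_{i<j} m_{ij}^2\big)$. Therefore
\[
8\delta_T + \binom{n}{4} = \sigma_4 = \tfrac18 n^2(n-1)^2 - \tfrac14 n(n-1)^2 - \tfrac12\sum_{i<j} m_{ij}^2.
\]
Solving for $\delta_T$ and simplifying the polynomial part (using $\binom{n}{4} = \tfrac1{24}n(n-1)(n-2)(n-3)$ and factoring out $n(n-1)$) should collapse everything to $\tfrac1{96}n^2(n-1)(n-2) - \tfrac1{16}\sum_{i<j} m_{ij}^2$, which is the claimed identity.

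The only real obstacle is bookkeeping: getting the signs right in $\operatorname{tr}(S^2)$ and the diagonal of $S^2$ (the skew-symmetry introduces a minus sign that is easy to drop), and then verifying that the purely polynomial terms $\tfrac18 n^2(n-1)^2 - \tfrac14 n(n-1)^2 - \binom{n}{4}$, divided by $8$, indeed equal $\tfrac1{96}n^2(n-1)(n-2)$. This is a routine but slightly delicate algebraic simplification; I would double-check it by evaluating both sides at a small value such as $n=4$ against the known count (the transitive tournament has $\delta_T = 0$, a single diamond has $\delta_T = 1$).
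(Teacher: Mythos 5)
Your proposal is correct and follows essentially the same route as the paper: both start from Lemma \ref{diam principa minors} to get $\sigma_4=8\delta_T+\binom{n}{4}$ and then express $\sigma_4$ through the spectrum of the skew-symmetric matrix $S$ in terms of the entries of $S^2$. The only cosmetic difference is that you use Newton's identities with the power sums $\operatorname{tr}(S^2)$ and $\operatorname{tr}(S^4)$, whereas the paper expands the factored characteristic polynomials of $S$ and $S^2$ and works with the coefficient $\beta_2=\frac{1}{2}\bigl(\operatorname{tr}(S^2)^2-\operatorname{tr}(S^4)\bigr)$; the final algebra (which you deferred) does check out.
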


\begin{proof}

Let $m$ be the integer part of $\frac{n}{2}$ and
$\pm i\lambda_{1},\ldots,\pm i\lambda_{m}$ the nonzero eigenvalues of $S$. As we have seen above
\begin{equation}
P_{S}(x)=%
\begin{cases}
(x^{2}+\lambda_{1}^{2})(x^{2}+\lambda_{2}^{2})\cdots(x^{2}+{\lambda_{m}^{2}%
}) & \text{ if }n\text{ is even }\\
x(x^{2}+\lambda_{1}^{2})(x^{2}+\lambda_{2}^{2})\cdots(x^{2}+{\lambda_{m}^{2}%
}) & \text{if }n\text{ is odd}%
\end{cases}\label{eq1}
\end{equation}

The nonzero eigenvalues of $S^2$ are $-\lambda_{1}^2,\ldots,-\lambda_{m}^2$,
each of them appears two times. Hence, we can write $P_{S^2}(x)$ in the following form.

\begin{equation}
	P_{S^2}(x)=%
	\begin{cases}
		(x^{2}+2\lambda_{1}^{2}x+\lambda_{1}^{4})\cdots(x^{2}+2\lambda_{m}^{2}x+\lambda_{m}^{4})%
		& \text{ if }n\text{ is even }\\
		x(x^{2}+2\lambda_{1}^{2}x+\lambda_{1}^{4})\cdots(x^{2}+2\lambda_{m}^{2}x+\lambda_{m}^{4})
		& \text{if }n\text{ is odd}%
	\end{cases}\label{eq2}
\end{equation}

Let $P_{S}(x):=x^n+\alpha_1 x^{n-1}+\ldots +\alpha_n$ and let $P_{S^2}(x):=x^n+\beta_1x^{n-1}+\ldots +\beta_n$.
By expanding expressions (\ref{eq1}) and (\ref{eq2}), we get
	\[
		\alpha_2 = \sum_i\lambda_{i}^{2}
	\]
	
	\[ 
		\alpha_4 = \sum_{i<j}{\lambda_i^2 \lambda_j^2} = \frac{1}{2}((\sum_i\lambda_{i}^{2})^{2}-\sum_i\lambda_{i}^{4})
 	\]
 	and
 	\[ 
		\beta_2 = 4\sum_{i<j}{\lambda_i^2 \lambda_j^2} + \sum_{i}{\lambda_i^4}
 	\]

	It follows that $\beta_{2}=2\alpha_{4}+\alpha_2^{2}$. By Equality (\ref{eq01}), we have $\alpha_2=\frac{n(n-1)}{2}$, and hence $\alpha_4 = \frac{1}{2}\beta_{2}-\frac{1}{2}\left(  \frac{n(n-1)}{2}\right)^{2}$.
	
	Since $S^2$ is symmetric and all its diagonal entries are $1-n$, by Equality (\ref{eq01}), we have
	
	\[ \beta_2 = \sum_{i<j}((n-1)^2 - m_{ij}^2) = \frac{n(n-1)^3}{2}-\sum_{i<j}m_{ij}^{2} \]
	
			Applying Lemma (\ref{diam principa minors}) and Equality (\ref{eq01}) again, we get		
	
								\[ \alpha_{4}= 8\delta_T+\binom{n}{4} \]
			It follows that
	\begin{eqnarray*}
		\delta_{T} & = & \frac{1}{8}\left(\alpha_{4}-\binom{n}{4}\right) \\
		           & = & \frac{1}{8}\left(  \frac{1}{2}\beta_{2}-\frac{1}{2}\left(  \frac{n(n-1)}{2}\right)^{2}-\binom{n}{4}\right) \\
		           & = & \frac{1}{16}\beta_{2}-\frac{1}{16}\left(  \frac{n(n-1)}{2}\right)^{2}-\frac{1}{8}\binom{n}{4} \\
		           & = & \frac{1}{96}n^2(n-1)(n-2) - \frac{1}{16}\sum_{i<j}{m_{ij}^2} 
	\end{eqnarray*}

	\end{proof}
	
	Let $T=(V, A)$ be a tournament, the \emph{switching} of $T$, according to a subset $X$ of $V$, consists of reversing all the arcs between $X$ and $V\backslash X$, we denote the resulting tournament by $T_{X}$. We say that two tournaments $T$ and $T^{'}$ on a vertex set $V$ are \emph{switching equivalent}, if there exists $X\subset V$ such that $T_{X} = T^{'}$. 
	\begin{remark} \label{remsw}
	It is well-known that two tournaments are switching equivalent iff their Seidel adjacency matrices are $\{\pm 1\}$-diagonally similar \cite{moorhouse95}. Since similarity by a $\pm1$ diagonal matrix preserves the principal minors, by Remark \ref{detdiamond}, switching equivalent tournaments have the same diamonds.
	\end{remark}
	Let $v$ be a vertex of a tournament $T=(V,A)$, \emph{the out-neighbourhood} $ N^{+}_T(v)$ of $v$ is the set of all vertices
of $T$ dominated by $v$. \emph{The in-neighbourhood} $ N^{-}_T(v)$ of $v$ is the set of all
vertices of $T$ which dominate $v$. In the switching $T_{N^{-}(v)}$ of $T$ according to $ N^{-}_T(v)$, the vertex $v$ dominates $V\setminus \{v\}$. Hence, by Remark \ref{remsw}, to study the number of diamonds, we can assume that there is a vertex $v$ dominating $V\setminus \{v\}$.
We obtain then the following lemma connecting the number of diamonds and the number of $3$-cycles.
\begin{lemma}\label{deltac3}
Let $T=(V, A)$ be a tournament containing a vertex $v$ that dominates $V\setminus \{v\}$. Then,
\[ \delta_T = \delta_{T - v} + c_3(T- v)\]
where $c_3(T-v)$ is the number of $3$-cycles in $T-v$.
\end{lemma}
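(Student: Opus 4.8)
The plan is to partition the $4$-element subsets of $V$ according to whether or not they contain $v$, and to count the diamonds induced by each type separately; summing the two counts will give the formula.

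First I would observe that a $4$-subset $X\subseteq V$ with $v\notin X$ is a $4$-subset of $V\setminus\{v\}$, and the sub-tournament of $T$ induced on $X$ coincides with the one induced by $T-v$. Hence the number of such subsets inducing a diamond is exactly $\delta_{T-v}$.

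Next I would treat the $4$-subsets containing $v$: each is of the form $\{v\}\cup W$ with $W$ a $3$-subset of $V\setminus\{v\}$, and distinct $W$ give distinct $4$-subsets. Since $v$ dominates $V\setminus\{v\}$, the vertex $v$ has out-degree $3$ in the sub-tournament induced on $\{v\}\cup W$, so this $4$-tournament is a vertex dominating the $3$-tournament induced on $W$; in particular it is never the diamond consisting of a vertex dominated by a $3$-cycle. Recalling from the introduction that, up to isomorphism, there are exactly four $4$-tournaments and that among those having a vertex of out-degree $3$ the diamond is precisely the one in which that vertex dominates a $3$-cycle (the only other one being transitive), I conclude that $\{v\}\cup W$ induces a diamond in $T$ if and only if $W$ induces a $3$-cycle in $T-v$. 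Therefore the number of $4$-subsets containing $v$ that induce a diamond is $c_3(T-v)$.

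Adding the two counts yields $\delta_T=\delta_{T-v}+c_3(T-v)$. There is essentially no obstacle here; the only point requiring a little care is the short case analysis on the isomorphism types of $4$-tournaments, which is settled by the classification recalled at the start of the paper (alternatively, one could read it off from Remark \ref{detdiamond} applied to the Seidel matrix of $\{v\}\cup W$ when $v$ dominates $W$).
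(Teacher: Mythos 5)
Your proof is correct and complete: the partition of $4$-subsets by whether they contain $v$, together with the observation that $\{v\}\cup W$ (with $v$ dominating $W$) is a diamond exactly when $W$ is a $3$-cycle, is precisely the intended argument. The paper states this lemma without proof, treating it as immediate, so your write-up simply supplies the routine counting that the authors left implicit.
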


Similarly to $\delta_T$, the number $c_3(T)$ of $3$-cycles in $T$ can also be expressed in terms of the entries of $S^2$.

\begin{proposition}\label{cmij}
		Let $T$ be an $n$-tournament and let $S$ be its Seidel adjacency matrix. Then,
			\[
					c_3(T)  = \frac{1}{24}n(n-1)(n-2) +\frac{1}{4}\sum_{i<j}{m_{ij}}
			\]
		where $m_{ij}$ are the entries of $S^2$.
\end{proposition}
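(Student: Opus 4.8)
The plan is to reduce the count of $3$-cycles to a trace and then translate it into the Seidel matrix. First I would record the classical identity $c_3(T)=\tfrac{1}{3}\operatorname{Tr}(A^{3})$, which holds because in a loopless digraph with no $2$-cycles a closed walk of length $3$ is exactly a directed triangle, and each triangle is counted once starting from each of its three vertices. Next, since $A+A^{t}=J_{n}-I_{n}$ and $A-A^{t}=S$, we have $A=\tfrac{1}{2}(J_{n}-I_{n}+S)$, so it suffices to compute $\operatorname{Tr}\big((J_{n}-I_{n}+S)^{3}\big)$.

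Write $B=J_{n}-I_{n}$, which is symmetric, while $S$ is skew-symmetric. Expanding the cube and using cyclicity of the trace gives $\operatorname{Tr}\big((B+S)^{3}\big)=\operatorname{Tr}(B^{3})+3\operatorname{Tr}(B^{2}S)+3\operatorname{Tr}(BS^{2})+\operatorname{Tr}(S^{3})$. The two terms that are odd in $S$ vanish: for symmetric $C$ and skew-symmetric $S$ one has $\operatorname{Tr}(CS)=\operatorname{Tr}\big((CS)^{t}\big)=\operatorname{Tr}(-SC)=-\operatorname{Tr}(CS)$, hence $\operatorname{Tr}(CS)=0$; applying this with $C=B^{2}$ and with $C=S^{2}$ kills $\operatorname{Tr}(B^{2}S)$ and $\operatorname{Tr}(S^{3})$. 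The remaining traces are elementary: from $(J_{n}-I_{n})^{2}=(n-2)J_{n}+I_{n}$ one gets $(J_{n}-I_{n})^{3}=(n^{2}-3n+3)J_{n}-I_{n}$, so $\operatorname{Tr}(B^{3})=n(n^{2}-3n+3)-n=n(n-1)(n-2)$; and since every diagonal entry of $S^{2}$ is $-\sum_{k}s_{ik}^{2}=1-n$, we have $\operatorname{Tr}(S^{2})=-n(n-1)$, while $\operatorname{Tr}(J_{n}S^{2})=\sum_{i,j}m_{ij}=2\sum_{i<j}m_{ij}+\operatorname{Tr}(S^{2})$, whence $\operatorname{Tr}(BS^{2})=\operatorname{Tr}(J_{n}S^{2})-\operatorname{Tr}(S^{2})=2\sum_{i<j}m_{ij}$. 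Combining, $c_3(T)=\tfrac{1}{3}\cdot\tfrac{1}{8}\big(n(n-1)(n-2)+6\sum_{i<j}m_{ij}\big)=\tfrac{1}{24}n(n-1)(n-2)+\tfrac{1}{4}\sum_{i<j}m_{ij}$.

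I do not expect a real obstacle here; the only points needing care are the vanishing of the odd-degree cross terms (this is exactly where skew-symmetry of $S$ enters) and keeping the diagonal of $S^{2}$ separate from its off-diagonal part. As a sanity check, one can instead argue purely combinatorially: fixing a pair $\{i,j\}$ with $i\to j$, each remaining vertex $k$ falls into one of four classes by the signs of $s_{ik},s_{kj}$ — it dominates both $i,j$, it is dominated by both, it lies on the transitive path $i\to k\to j$, or it completes a $3$-cycle on $\{i,j,k\}$ — so that $m_{ij}$ equals (number completing a $3$-cycle) plus (number on a transitive path) minus the numbers of the first two types; summing over all pairs, each transitive triple is counted once in each of the three non-cycle sums and each $3$-cycle is counted three times, giving $\sum_{i<j}m_{ij}=3c_3(T)-\bigl(\binom{n}{3}-c_3(T)\bigr)=4c_3(T)-\binom{n}{3}$, which rearranges to the stated identity.
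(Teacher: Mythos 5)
Your argument is correct, but it takes a genuinely different route from the paper's. The paper works through degrees: it introduces $\gamma_{ij}=|N^{+}_T(i)\cap N^{-}_T(j)|+|N^{-}_T(i)\cap N^{+}_T(j)|$, establishes $m_{ij}=2\gamma_{ij}-n+2$ as Equality (\ref{mij}), double-counts to get $\sum_{i<j}\gamma_{ij}=\sum_{k}d^{+}_T(k)d^{-}_T(k)$, and then invokes the classical score formula $c_3(T)=\binom{n}{3}-\sum_{i}\binom{d^{+}_T(i)}{2}$. You instead write $c_3(T)=\tfrac{1}{3}\operatorname{Tr}(A^{3})$, substitute $A=\tfrac{1}{2}(J_n-I_n+S)$, and let the skew-symmetry of $S$ annihilate the odd cross terms; all your trace evaluations check out, including $\operatorname{Tr}\big((J_n-I_n)^{3}\big)=n(n-1)(n-2)$ and $\operatorname{Tr}\big((J_n-I_n)S^{2}\big)=2\sum_{i<j}m_{ij}$. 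Your main argument is more self-contained and purely algebraic, whereas the paper's choice is natural in context because the intermediate identities (\ref{mij}) and (\ref{gamma}) are reused elsewhere (for instance in the characterization of doubly regular tournaments). Your closing combinatorial sanity check --- classifying, for each pair $\{i,j\}$ and each third vertex $k$, the sign of $s_{ik}s_{kj}$, so that each $3$-cycle contributes $+3$ to $\sum_{i<j}m_{ij}$ and each transitive triple contributes $-1$, giving $\sum_{i<j}m_{ij}=4c_3(T)-\binom{n}{3}$ --- is arguably the shortest proof of all and is closest in spirit to the paper's, though the paper routes the count through out-degrees rather than through triples directly.
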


Before proving this proposition, we need some notions and basic results about tournaments. 
For more details, the reader is referred to \cite{moon79}.

Let $T$ be an $n$-tournament. Without loss of generality, we can assume that the vertex set of $T$ is $\{1,\ldots,n\}$.
\emph{The out-degree} $d^{+}_T(i)$ (resp. \emph{in-degree} $d^{-}_T(i)$) of a vertex $i$ is $|N^{+}_T(i)|$ (resp. $|N^{-}_T(i)|$).
The out-degree $d^{+}_T(i,j)$ of $(i,j)$ (resp. in-degree $d^{-}_T(i,j)$ of $(i,j)$) is $|N^{+}_T(i)\cap N^{+}_T(j)|$ (resp. $|N^{-}_T(i)\cap N^{-}_T(j)|$).

The tournament $T$ is \emph{regular}, if there is a constant $k$ such that $d^{+}_T(i)=k $ for every $i\in \{1,\ldots,n\}$; it is \emph{doubly regular} if there is a constant $h$ such that  $d^{+}_T(i,j)=h$ for every $i\neq j\in \{1,\ldots,n\}$. A doubly regular tournament is also regular.
If $T$ is regular, then $n$ is odd and $d^{+}_T(i)=\frac{n-1}{2}$ for $i\in \{1,\ldots,n\}$. If $T$ is doubly regular, then $n\equiv 3 \pmod{4} $ and $d^{+}_T(i,j)=d^{-}_T(i,j)=\frac{n-3}{4}$ for $i\neq j\in \{1,\ldots,n\}$. 
	
Recall the well-known equalities
	
	\begin{equation}	
			d^{+}_T(i) + d^{-}_T(i) = n - 1
			\label{sumdeg}
	\end{equation}

  \begin{equation}	
			\sum_{i}{d^{+}_T(i)}=\sum_{i}{d^{-}_T(i)}
														 =\frac{n(n-1)}{2}
	\label{dx}
	\end{equation}
		\begin{equation}	
			c_3(T)= \binom{n}{3} - \sum_{i}\binom{d^{+}_T(i)}{2}
			\label{c3T}
	\end{equation}
	
	\begin{remark} \label{c3reg}
			It follows from Equality (\ref{c3T}) that $c_3(T)\leq \frac{1}{24}(n^3-n)$. Moreover equality holds iff $n$ is odd and $T$ is regular.
	\end{remark}

For $i\neq j \in \{1,\ldots,n\}$, let $\gamma_{ij}:=|N^{+}_T(i)\cap N^{-}_T(j)|+|N^{-}_T(i)\cap N^{+}_T(j)|$.
Then, we have

	\begin{equation}
		m_{ij} =2\gamma_{ij} - n + 2  
		\label{mij}
	\end{equation}	
	
		\begin{equation}
			d^{+}_T(i, j) - d^{-}_T(i, j) = d^{+}_T(i) - d^{-}_T(j) 
			\label{dij}
		\end{equation}
	
	\begin{equation}
	\gamma_{ij} = n - 2 - (d^{+}_T(i, j) + d^{-}_T(i, j))
	\label{gammaij}
	\end{equation}

Combining Equalities (\ref{dij}), (\ref{gammaij}) and (\ref{sumdeg}), we get

	\begin{equation}
		\gamma_{ij} = 2n - 3 - (d^{+}_T(i) + d^{+}_T(j) + 2d^{-}_T(i, j))
		\label{gamma}
	\end{equation}
		
	By double-counting principle, we obtain	
		\begin{equation}
	\sum_{i<j}\gamma_{ij} = 	\sum_{k}{d^{+}_T(k)}{d^{-}_T(k)}
	\label{gammaxy}
	\end{equation}

		Using Equalities (\ref{sumdeg}) and (\ref{gammaxy}), we get 
	
		\[
		  \sum_{i<j}\gamma_{ij} = (n-2)\sum_{k}  d^{+}_T(k) - 2 \binom{ d^{+}_T(k)}{2}
		\]
		
		It follows from Equalities (\ref{dx}) and (\ref{c3T}) that
			\[
							c_3(T)=\frac{1}{2}\sum_{i<j}\gamma_{ij} -\frac{1}{12} \, {\left(n - 1\right)} {\left(n - 2\right)} n						
			\]
		
		Equality (\ref{mij}) allows to complete the proof of Proposition 5.

\section{Characterization of $n$-tournaments with maximum number of diamonds for $n\equiv0,2,3\pmod{4}$}
Throughout this section, $T$ denotes an $n$-tournament and $S$ denotes its Seidel adjacency matrix. 

It follows from Proposition \ref{sumentries} that
\begin{equation}\label{0mod4}
	\delta_T\leq \frac{1}{96}n^2(n-1)(n-2)
\end{equation}
Equality holds if and only if $m_{ij}=0$ for every $i\neq j$. Since $m_{ii}=1-n$, then $\delta_T = \frac{1}{96}n^2(n-1)(n-2)$ if and only if $S^2=(1-n)I_n$, or equivalently $SS^t = (n-1)I_n$.  Skew-symmetric Seidel matrices that satisfy this equality are called \emph{skew-conference} matrices, and exist only if $n$ is divisible by $4$.

\begin{example}
			For any prime power $q\equiv3\pmod{4}$, the Paley tournament $T(q)$ is the
tournament whose vertices are elements of $\mathbb{F}_q$ where the vertex $x$ dominates the vertex $y$ iff
$y-x$ is a square in $\mathbb{F}_q$. Let $T^{*}(q)$ be the tournament on $n=q+1$ vertices obtained by adding to $T(q)$ a new vertex which dominates all vertices of $T(q)$.
It is well-known that the Seidel adjacency matrix of $T^{*}(q)$ is a skew conference matrix. Then, the number of diamonds in $T^{*}(q)$ is $\frac{1}{96}n^2(n-1)(n-2)$. 
\end{example}

For $n$ odd, we obtain the following refinement of Equality (\ref{0mod4}).
\begin{proposition} \label{diamonds3mod4}
	If $n$ is odd, then
	\[ \delta_T \leq \frac{1}{96}n(n-1)(n-3)(n+1) \]
	Moreover, equality holds if and only if	 $|S^2 + n I_n| = J_{n}$.
\end{proposition}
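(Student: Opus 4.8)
The plan is to read off the bound directly from Proposition~\ref{sumentries}, which gives
\[
\delta_T = \frac{1}{96}n^2(n-1)(n-2) - \frac{1}{16}\sum_{i<j}m_{ij}^2 ,
\]
and to estimate $\sum_{i<j}m_{ij}^2$ from below by a parity argument. The key observation I would make is that when $n$ is odd, every off-diagonal entry $m_{ij}$ of $S^2$ is an \emph{odd} integer. Indeed, by Equality~(\ref{mij}) we have $m_{ij}=2\gamma_{ij}-n+2$, and $2\gamma_{ij}$ is even while $-n+2$ is odd; equivalently, since $S^{t}=-S$, the entry $m_{ij}$ equals $-\sum_{k\neq i,j}s_{ik}s_{jk}$, a sum of $n-2$ terms each equal to $\pm 1$, hence of the same parity as $n-2$, which is odd. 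Therefore $m_{ij}^2\ge 1$ for every pair $i<j$, and since there are $\binom{n}{2}$ such pairs,
\[
\sum_{i<j}m_{ij}^2 \;\ge\; \binom{n}{2} \;=\; \frac{n(n-1)}{2}.
\]

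Substituting this into the formula above and simplifying is then routine:
\[
\delta_T \;\le\; \frac{1}{96}n^2(n-1)(n-2) - \frac{1}{16}\cdot\frac{n(n-1)}{2}
\;=\; \frac{n(n-1)}{96}\bigl(n(n-2)-3\bigr) \;=\; \frac{1}{96}n(n-1)(n-3)(n+1),
\]
using only the factorization $n(n-2)-3=n^{2}-2n-3=(n-3)(n+1)$. This is exactly the asserted inequality.

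For the equality case, equality in the displayed bound forces $m_{ij}^2=1$, that is $|m_{ij}|=1$, for every $i<j$, and hence for every $i\neq j$ by symmetry of $S^2$. Since the diagonal entries of $S^2$ are all equal to $1-n$, the matrix $S^2+nI_n$ then has all diagonal entries equal to $1$ and all off-diagonal entries equal to $\pm 1$, so $|S^2+nI_n|=J_n$. Conversely, if $|S^2+nI_n|=J_n$ then $|m_{ij}|=1$ for all $i\neq j$, so each $m_{ij}^2=1$, the inequality $\sum_{i<j}m_{ij}^2\ge\binom{n}{2}$ becomes an equality, and consequently $\delta_T=\frac{1}{96}n(n-1)(n-3)(n+1)$.

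I do not anticipate a genuine obstacle here: the entire proof rests on the single parity observation about the off-diagonal entries of $S^2$, after which everything is a one-line substitution and factorization. The only points requiring a little care are stating both directions of the equality characterization, and noting that the passage from ``for all $i<j$'' to ``for all $i\neq j$'' uses nothing more than the symmetry of $S^2$.
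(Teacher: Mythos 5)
Your proof is correct and follows essentially the same route as the paper's: the parity observation that each off-diagonal $m_{ij}$ is odd (via Equality~(\ref{mij})), the resulting bound $\sum_{i<j}m_{ij}^2\ge\binom{n}{2}$, substitution into Proposition~\ref{sumentries}, and the equality analysis using $m_{ii}=1-n$. The extra verification of the parity fact via $m_{ij}=-\sum_{k\neq i,j}s_{ik}s_{jk}$ is a nice touch but not a different argument.
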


\begin{proof}
			
Let $S^2:=(m_{ij})$.
Since $n$ is odd, by Equality (\ref{mij}), $m_{ij}$ is also odd.
Hence $\sum_{i<j}{m_{ij}^2} \geq n(n-1)/{2}$.
	
By Proposition \ref{sumentries}, 
	\[\displaystyle
	\begin{array}{lll}
	      \delta_{T}  & = & \frac{1}{96}n^2(n-1)(n-2) - \frac{1}{16}\sum_{i<j}{m_{ij}^2}\\
				            &\leq & \frac{1}{96}n^2(n-1)(n-2) - \frac{1}{16} n(n-1)/{2}\\
										& = & \frac{1}{96}n(n-1)(n-3)(n+1)

	\end{array}\]

Equality holds if and only if   $|m_{ij}| = 1$   for $i \neq j$, or equivalently $|S^2 + n I_n| = J_{n}$, because $m_{ii}=1-n$.
\end{proof}

\begin{theorem}
	If $n \equiv 3 \pmod{4}$, then the following assertions are equivalent
	\begin{enumerate}[label=\arabic*.]
		\item $T$ has $\frac{1}{96}n(n-1)(n-3)(n+1)$ diamonds.
		\item There exists a diagonal ${\pm 1}$-matrix D such that $DS^2D+nI_n=J_n$.
		\item $T$ is switching equivalent to a doubly regular tournament.	
	\end{enumerate}
\end{theorem}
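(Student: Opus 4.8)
The plan is to prove the cyclic chain $(1)\Rightarrow(2)\Rightarrow(3)\Rightarrow(1)$. Since $n\equiv3\pmod4$ is odd, $S$ has $0$ as a simple eigenvalue and its remaining eigenvalues are $\pm i\lambda_1,\dots,\pm i\lambda_{(n-1)/2}$ with $\lambda_k>0$, as recalled before Proposition~\ref{sumentries}. Write $B:=S^2+nI_n$; its diagonal entries all equal $1$, since $m_{ii}=1-n$. For $(1)\Rightarrow(2)$: by Proposition~\ref{diamonds3mod4}, the first assertion gives $|B|=J_n$, so every entry of the symmetric matrix $B$ is $\pm1$ and $\operatorname{tr}(B^2)=\sum_{i,j}b_{ij}^2=n^2$. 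On the other hand, the eigenvalues of $S^2$ are $0$ (once) and $-\lambda_k^2$ (twice each), so those of $B$ are $n$ (once) and $n-\lambda_k^2$ (twice each), whence $\operatorname{tr}(B^2)=n^2+2\sum_k(n-\lambda_k^2)^2$. Comparing the two values forces $\lambda_k^2=n$ for every $k$, so $S^2$ has eigenvalues $0$ (once) and $-n$ ($n-1$ times); hence $B$ is symmetric, positive semidefinite, and of rank $1$. Writing $B=vv^t$ and using $b_{ii}=1$, we get $v\in\{\pm1\}^n$, and then $D:=\operatorname{diag}(v)$ satisfies $DBD=(Dv)(Dv)^t=J_n$, i.e. $DS^2D+nI_n=J_n$. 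I expect this spectral step---promoting the bound $|m_{ij}|=1$ to a rank-one identity---to be the heart of the argument; the remaining implications are routine.

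For $(2)\Rightarrow(3)$: by Remark~\ref{remsw}, $S':=DSD$ is the Seidel adjacency matrix of a tournament $T'$ switching equivalent to $T$, and $(S')^2=DS^2D=J_n-nI_n$. From $(S')^2\mathbf1=0$ and $(S')^2=-S'(S')^t$ we get $\lVert(S')^t\mathbf1\rVert^2=0$, hence $S'\mathbf1=0$, so $T'$ is regular with $d^+_{T'}(i)=\tfrac{n-1}{2}$ for all $i$. Since every off-diagonal entry of $(S')^2$ equals $1$, Equality~(\ref{mij}) gives $\gamma'_{ij}=\tfrac{n-1}{2}$ for $i\neq j$; feeding this and the regularity of $T'$ into Equality~(\ref{gamma}) gives $d^-_{T'}(i,j)=\tfrac{n-3}{4}$, and then Equality~(\ref{dij}) gives $d^+_{T'}(i,j)=d^-_{T'}(i,j)=\tfrac{n-3}{4}$. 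Hence $T'$ is doubly regular, and $T$ is switching equivalent to it.

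For $(3)\Rightarrow(1)$: switching equivalent tournaments have the same number of diamonds by Remark~\ref{remsw}, so we may assume $T$ itself is doubly regular, i.e. $d^+_{T}(i,j)=d^-_{T}(i,j)=\tfrac{n-3}{4}$ for all $i\neq j$. Then Equality~(\ref{gammaij}) gives $\gamma_{ij}=\tfrac{n-1}{2}$ and Equality~(\ref{mij}) gives $m_{ij}=1$ for all $i\neq j$, so $|S^2+nI_n|=J_n$; Proposition~\ref{diamonds3mod4} then yields $\delta_T=\tfrac{1}{96}n(n-1)(n-3)(n+1)$, which is the first assertion.
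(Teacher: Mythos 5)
Your proof is correct, but two of the three implications are argued differently from the paper. For $(1)\Rightarrow(2)$ the paper works combinatorially: it uses Equalities (\ref{mij}) and (\ref{gamma}) to show that $m_{ij}\equiv 1\pmod 4$ or $m_{ij}\equiv -1\pmod 4$ according to whether $d^+_T(i)$ and $d^+_T(j)$ have the same parity, so that $|m_{ij}|=1$ forces the sign, and then takes $D=\operatorname{diag}(\epsilon_i)$ with $\epsilon_i$ recording the parity of $d^+_T(i)$. Your trace computation $\operatorname{tr}(B^2)=n^2=n^2+2\sum_k(n-\lambda_k^2)^2$ instead forces $B=S^2+nI_n$ to be rank-one positive semidefinite, hence $B=vv^t$ with $v\in\{\pm1\}^n$; this is a clean alternative, and it has the bonus of identifying the spectrum of $S$ as $\{0,\pm i\sqrt n\}$, which the paper's argument does not give directly. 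For the regularity step in $(2)\Rightarrow(3)$, the paper computes $c_3(T')=\frac{1}{24}n(n-1)(n+1)$ via Proposition \ref{cmij} and invokes Remark \ref{c3reg}, whereas you get $S'\mathbf 1=0$ from $\lVert (S')^t\mathbf 1\rVert^2=-\mathbf 1^t(S')^2\mathbf 1=0$; both are valid, and yours avoids the $3$-cycle count entirely. The derivation of double regularity from the degree identities and the implication $(3)\Rightarrow(1)$ match the paper. The only points worth flagging are that your spectral step silently uses the simplicity of the eigenvalue $0$ of $S$ for odd $n$ (which the paper does establish, and which you correctly cite) and the symmetry of $B$ (immediate from $S$ being skew-symmetric); neither is a gap.
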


\begin{proof}

$1.\Leftrightarrow 2.$	Assume that $T$ has $\frac{1}{96}n(n-1)(n-3)(n+1)$ diamonds. By Proposition \ref{diamonds3mod4}, we have 
 $m_{ij} = \pm1$ for every $i\neq j$. Using Equalities (\ref{mij}) and (\ref{gamma}), we get:

\begin{enumerate}
			\item[i.] If $ d^{+}_T(i) \equiv d^{+}_T(j) \pmod{2}$, then $\gamma_{ij} \equiv 1 \pmod{2}$, and $m_{ij} \equiv 1 \pmod{4}$. Hence $m_{ij} = 1$.
			\item[ii.] If $ d^{+}_T(i) \equiv d^{+}_T(j) + 1 \pmod{2}$, then $\gamma_{ij} \equiv 0 \pmod{2}$, and $m_{ij} \equiv -1 \pmod{4}$. Hence $m_{ij} = -1$.
	\end{enumerate}

	Let $D = diag(\epsilon_1, \epsilon_2,\ldots, \epsilon_n)$ be the diagonal matrix such that $\epsilon_i = 1$ if $d^{+}_T(i)$ is even and $\epsilon_i = -1$ otherwise. It is easy to check that $\epsilon_i m_{ij} \epsilon_j = 1$ for all $i\neq j$. Then, $DS^2D + nI_n =  J_n$. 
	
	The converse is trivial.

 $2.\Leftrightarrow 3.$ 

Suppose that there exists a diagonal ${\pm 1}$-matrix D such that $DS^2D+nI_n=J_n$. The tournament $T^{'}$ whose Seidel adjacency matrix is $DSD$ is switching equivalent to $T$. We will prove that $T^{'}$ is doubly regular.
							
Let $DS^2D := (m_{ij}^{'})$. Thus, for $i\neq j\in \{1,\ldots,n\}$, we have					
											\[ m_{ij}^{'} = 1 \]
											
By Proposition \ref{cmij}, we get
    \[ c_3(T^{'})=\frac{1}{24}n(n-1)(n+1) \]					

It follows from Remark \ref{c3reg} that the tournament $T^{'}$ is regular.						

	By Identities (\ref{mij}) and  (\ref{gamma}), the in-degree of each pair in $T^{'}$ is $(n-3)/4$. Hence, by definition, $T^{'}$ is doubly regular.
	
	Conversely, assume that $T$ is switching equivalent to a doubly regular tournament $T^{'}$. The out-degree and the in-degree of each pair $(i,j)$ in $T^{'}$ is $(n-3)/4$.  Let ${S^{'}}$ be the Seidel adjacency matrix of $T^{'}$ and let ${S^{'}}^2:= (m^{'}_{ij})$. By Equalities (\ref{mij}) and (\ref{gamma}), we get $m^{'}_{ij}=1$ for $i\neq j\in \{1,\ldots,n\}$ and hence ${S^{'}}^2+nI_n=J_n$. Since $T$ and $T^{'}$ are switching equivalent, $S^{'}=DSD$ for some $\{\pm1\}$-diagonal matrix $D$ and then $DS^2D+nI_n=J_n$.

\end{proof}
	
\begin{theorem}\label{max2}
	If $n \equiv 2 \pmod{4}$, then
	\[ \delta_T \leq \frac{1}{96}n(n-3)(n-2)(n+2) \]
	Moreover, equality holds if and only if
		\[ |PS^2P^t + (n+1)I_n| = \begin{pmatrix}
														2J_{n/2} & O \\
														  O      & 2J_{n/2}
													\end{pmatrix}	 \]
		for some permutation matrix $P$.
\end{theorem}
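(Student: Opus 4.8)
The plan is to mimic the $n$ odd case, but to work modulo a more refined parity/congruence analysis of the entries $m_{ij}$ of $S^2$, since now $n$ is even and $m_{ij}$ is even by Equality (\ref{mij}). First I would establish the inequality. From Proposition \ref{sumentries} we have $\delta_T = \frac{1}{96}n^2(n-1)(n-2) - \frac{1}{16}\sum_{i<j} m_{ij}^2$, so we need a lower bound on $\sum_{i<j} m_{ij}^2$. Since $n\equiv 2\pmod 4$, $m_{ij}$ is even, so $m_{ij}^2 \geq 0$; the naive bound $m_{ij}^2\ge 0$ is not enough, so I would look for how many of the $m_{ij}$ can actually vanish. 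By Equality (\ref{dij}) we have $d^+_T(i,j) - d^-_T(i,j) = d^+_T(i) - d^-_T(j)$, and combining with Equality (\ref{mij})--(\ref{gamma}) one sees that the residue of $m_{ij}$ modulo $4$ is controlled by the parities of $d^+_T(i)$ and $d^+_T(j)$: when $d^+_T(i) \equiv d^+_T(j) \pmod 2$ one gets $m_{ij}\equiv 2\pmod 4$ (so $m_{ij}\ne 0$, hence $m_{ij}^2\ge 4$), and when $d^+_T(i)\not\equiv d^+_T(j)\pmod 2$ one gets $m_{ij}\equiv 0 \pmod 4$ (so $m_{ij}$ may be $0$). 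Splitting the vertex set $V$ into $V_0=\{i: d^+_T(i)\text{ even}\}$ and $V_1=\{i: d^+_T(i)\text{ odd}\}$ with $|V_0|=a$, $|V_1|=n-a$, the number of pairs $\{i,j\}$ with $i,j$ in the same class is $\binom{a}{2}+\binom{n-a}{2}$, each contributing at least $4$ to $\sum_{i<j}m_{ij}^2$. Hence $\sum_{i<j}m_{ij}^2 \geq 4\left(\binom{a}{2}+\binom{n-a}{2}\right)$, which is minimized (over integers, with the constraint that $a$ is even — this needs checking, via $\sum_i d^+_T(i) = \binom{n}{2}$ together with $n\equiv 2\pmod 4$) at $a=n/2$, giving $\sum_{i<j}m_{ij}^2 \geq 8\binom{n/2}{2} = n(n-2)$. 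Plugging in yields $\delta_T \leq \frac{1}{96}n^2(n-1)(n-2) - \frac{n(n-2)}{16} = \frac{1}{96}n(n-2)\big(n(n-1)-6\big) = \frac{1}{96}n(n-2)(n-3)(n+2)$, the claimed bound.

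Next I would analyze the equality case. Equality forces $a = n/2$ (so exactly $n/2$ vertices of even out-degree and $n/2$ of odd out-degree), and forces $m_{ij}^2 = 4$, i.e. $m_{ij} = \pm 2$, for every pair $\{i,j\}$ inside $V_0$ or inside $V_1$, and $m_{ij} = 0$ for every pair with one endpoint in $V_0$ and one in $V_1$. Choose a permutation matrix $P$ that orders the vertices so that $V_0$ comes first and $V_1$ second. Then $PS^2P^t$ has the block form $\begin{pmatrix} B_0 & O \\ O & B_1\end{pmatrix}$ where each $B_\ell$ is $(n/2)\times(n/2)$ with diagonal entries $m_{ii} = 1-n$ and off-diagonal entries $\pm 2$. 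Therefore $|PS^2P^t + (n+1)I_n|$ has off-diagonal entries equal to $2$ and diagonal entries $|1-n + n+1| = |2| = 2$ inside each block, and $0$ off the blocks — which is exactly $\begin{pmatrix} 2J_{n/2} & O \\ O & 2J_{n/2}\end{pmatrix}$. Conversely, if $|PS^2P^t + (n+1)I_n|$ has that form for some permutation $P$, then all off-diagonal entries of $S^2$ within the two index-blocks determined by $P$ are $\pm 2$ and all cross entries are $0$, so $\sum_{i<j}m_{ij}^2 = 4\big(\binom{n/2}{2}+\binom{n/2}{2}\big) = n(n-2)$ and Proposition \ref{sumentries} gives exactly $\delta_T = \frac{1}{96}n(n-2)(n-3)(n+2)$.

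The step I expect to be the main obstacle is pinning down the congruence class of $m_{ij}$ modulo $4$ in terms of the out-degree parities, and in particular verifying rigorously that $a := |V_0|$ must equal $n/2$ at the optimum. For the first, one needs Equality (\ref{dij}) to see that $d^+_T(i,j)$ and $d^-_T(i,j)$ differ by $d^+_T(i)-d^-_T(j) = d^+_T(i)+d^+_T(j) - (n-1)$, then feed this through (\ref{gammaij}) and (\ref{mij}): writing $m_{ij} = 2\gamma_{ij} - (n-2)$ with $\gamma_{ij} = (n-2) - d^+_T(i,j) - d^-_T(i,j)$, and using $n-2 \equiv 0 \pmod 4$, the residue of $m_{ij} \pmod 4$ comes down to $-2(d^+_T(i,j)+d^-_T(i,j)) \pmod 4$, i.e. to the parity of $d^+_T(i,j)+d^-_T(i,j)$; and that sum has the same parity as $d^+_T(i,j)-d^-_T(i,j) = d^+_T(i)+d^+_T(j)-(n-1)$, whose parity is that of $d^+_T(i)+d^+_T(j)+1$. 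So the two classes $d^+_T(i)\equiv d^+_T(j)$ versus $d^+_T(i)\not\equiv d^+_T(j)$ split the pairs into $m_{ij}\equiv 2$ and $m_{ij}\equiv 0 \pmod 4$ respectively, as needed. For the second point, from $\sum_i d^+_T(i) = \binom{n}{2} = \frac{n(n-1)}{2}$ and $n\equiv 2\pmod 4$, the right side is odd times $n/2$ with $n/2$ odd, so $\sum_i d^+_T(i)$ is odd, forcing $|V_1| = n-a$ to be odd, hence $a$ odd as well — wait, that gives $a$ odd, not even, so I would instead directly minimize $f(a) = \binom{a}{2}+\binom{n-a}{2}$ over odd integers $a$; $f$ is a parabola in $a$ symmetric about $a = n/2$, and since $n/2$ is odd (as $n\equiv 2\pmod 4$), the minimum is attained exactly at $a = n/2$, giving $f(n/2) = 2\binom{n/2}{2}$ and no smaller value is possible, so the bound and its equality characterization are as stated.
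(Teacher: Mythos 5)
Your proposal is correct and follows essentially the same route as the paper's proof: partition the vertices by out-degree parity, use Equalities (\ref{mij}) and (\ref{gamma}) to show $m_{ij}\equiv 2\pmod 4$ (hence $m_{ij}^2\geq 4$) for pairs with equal parity, minimize $\binom{a}{2}+\binom{n-a}{2}$ at $a=n/2$, and read off the equality case from Proposition \ref{sumentries}. The extra parity discussion of $a$ is harmless but unnecessary, since the convexity minimum at $a=n/2$ already gives the bound.
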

	
\begin{proof}
	We label the vertices of $T$ so that the first $c$ vertices have an even out-degree and the remaining $n-c$ vertices have an odd out-degree.
  With respect to this labelling, the Seidel adjacency matrix of $T$ is $PSP^t$ where $P$ is a permutation matrix. Let $PS^2P^t:=(m^{'}_{ij})  $.
	Let $i\neq j\in\{1,\ldots,n\}$ such that $ d^{+}_T(i) \equiv d^{+}_T(j) \pmod{2} $. By Equality (\ref{gamma}), $\gamma_{ij} \equiv 1 \pmod{2}$, and by Equality (\ref{mij}), $m^{'}_{ij} \equiv 2 \pmod{4}$. Therefore, ${m^{'}_{ij}}^2 \geq 4$.
		
	By Proposition \ref{sumentries}, we have
			\[\begin{array}{lll}
					\delta_{T}  & =   & \displaystyle\frac{1}{96}n^2(n-1)(n-2) - \frac{1}{16}\sum_{i<j}{{m^{'}_{ij}}^2}\\
											&\leq & \displaystyle\frac{1}{96}n^2(n-1)(n-2) - \frac{1}{16}\left(\sum_{1\leq i<j\leq c}{{m^{'}_{ij}}^2}+\sum_{c+1\leq i<j\leq n}{{m^{'}_{ij}}^2}\right)\\
											&\leq & \displaystyle\frac{1}{96}n^2(n-1)(n-2) - \frac{1}{16}\left(4\binom{c}{2}+4\binom{n-c}{2}\right)\\
											&\leq & \displaystyle\frac{1}{96}n^2(n-1)(n-2) - \frac{4}{16}\left(\frac{n}{2}\left(\frac{n}{2}-1\right)\right)=\frac{1}{96}n(n-3)(n-2)(n+2)
											
					\end{array}
			\]
	Equality holds iff $c=\frac{n}{2}$, $m^{'}_{ij}=\pm 2$ if $1\leq i<j\leq c$ or $c+1\leq i<j\leq n$ and $m^{'}_{ij}=0$ otherwise.
\end{proof}

We give two classes of tournaments that satisfy the conditions of Theorem \ref{max2}.

\begin{enumerate}[leftmargin=*]
	\item 
Recall that an EW matrix $B$ of order $n\equiv2\pmod{4}$ is a $(\pm1)$-matrix verifying $BB^t = B^tB = \begin{pmatrix}
  					M & 0 \\
  					0 & M 
				 \end{pmatrix}$ where $M=(n-2)I_{n/2}+2J_{n/2}$. Ehlich \cite{ehlich64} and Wojtas \cite{wojtas64} independently proved that EW matrices have the maximum determinant among $\pm1$-matrices of order $n\equiv 2 \pmod 4$. EW matrices exist only if $2n - 2$ is the sum of two squares. An EW matrix is said to be of \emph{skew type} if $B+B^t=2I_n$. Such matrix exist only if $2n-3$ is a square, hence, there are no EW matrices of skew type with order $n=10, 18, 22, 30, 34, 38, 46, 50$.
				 
Consider the matrix $B - I_n$, where $B$ is an EW-matrix of skew type. Clearly, this matrix is skew-symmetric, moreover, it has the maximum determinant among skew-symmetric Seidel matrices of order $n\equiv2\pmod4$ \cite{armario16}. By simple computation, we have
\[(B - {I_n})^2 = \begin{pmatrix}
  			    				-2J_{n/2} & O \\
					             O      & -2J_{n/2}
						   \end{pmatrix}+(3-n)I_n.\]
Hence, by Theorem \ref{max2}, if $S+I_n$ is an EW matrix, then $T$ has $\frac{1}{96}n(n-3)(n-2)(n+2)$ diamonds.
Moreover, by ~\cite[Lemma 3.3]{greaves17}, the characteristic polynomial of $S$ is $P_S(x) = (x^2 + 8k + 1)(x^2 + 4k-1)^{2k}$ where $n=4k+2$.

\item Let $T$ be a doubly regular tournament on $n+1=4k+3$ vertices and let $T^{'}$ be the tournament obtained by removing any vertex $v$ of $T$. It is easy to see that for every vertices $i, j$ of $T^{'}$, we have
\[
  d^{+}_{T^{'}}(i, j)+d^{-}_{T^{'}}(i, j)=
	\begin{cases}
		2k-1 & \mbox{if } v\in N^{+}_T(i)\cap N^{+}_T(j)\mbox{ or } v\in N^{-}_T(i)\cap N^{-}_T(j) \\
		2k & \mbox{otherwise} 
	\end{cases}
\]

Let $S$ be the Seidel adjacency matrix of $T^{'}$. Using Identities (\ref{mij}) and (\ref{gammaij}), we find that up to permutation \[S^2+(n+1)I_{n}=\begin{pmatrix}
  			    				2J_{n/2} & O \\
					                    O & 2J_{n/2}
						   \end{pmatrix}.\] Hence, again by Theorem \ref{max2}, the tournament $T^{'}$ has  $\frac{1}{96}n(n-3)(n-2)(n+2)$ diamonds. Moreover, by~\cite[Lemma~4.2.iii]{greaves17}, $P_S(x) = (x^2+1)(x^2+4k+3)^{2k}$.			  
\end{enumerate}

\begin{remark}
Let $T$ be a tournament with $n=4k+2$ vertices and let $S$ be its Seidel adjacency matrix. It follows from~\cite[Lemmata 3.3 and 3.7]{greaves17} that the characteristic polynomial of $S$ is $P_S(x) = (x^2 + 8k + 1)(x^2 + 4k-1)^{2k}$ iff there is a $\pm 1$-diagonal matrix D such that $DSD$ is a skew-symmetric EW matrix.
\end{remark}
	
\begin{remark}
Let $T$ be a tournament and let $S$ be its Seidel adjacency matrix. It follows from Lemma \ref{diam principa minors} and Equality (\ref{eq01}) that if $P_S(x) = (x^2 + 8k + 1)(x^2 + 4k-1)^{2k}$ or $P_S(x) = (x^2+1)(x^2+4k+3)^{2k}$, then $T$ has the maximum number of diamonds.
\end{remark}

Up to switching, there are two 6-tournaments with the maximum number of diamonds $6$, one of them is obtained by removing a vertex from a doubly regular tournament, and the other consists of two $3$-cycles one dominating the other, its Seidel adjacency matrix is a skew symmetric EW matrix.

As for $10$-tournaments, using SageMath, we found two switching classes of tournaments with the maximum number of diamonds $70$. The characteristic polynomial of the tournaments in the first class is $(x^2+1)(x^2+11)^4$, we identified one as a tournament obtained by removing a vertex from a doubly regular tournament.
The characteristic polynomial of the tournaments in the second class is $(x^2+1)(x^4+18x^2+61)^2$. The tournament with the following Seidel adjacency matrix belongs in the second class. 
\[
		S=\left(\begin{array}{rrrrrrrrrr}
									0 & -1 & 1 & -1 & 1 & -1 & -1 & 1 & -1 & -1 \\
									1 & 0 & -1 & -1 & -1 & -1 & 1 & 1 & 1 & -1 \\
									-1 & 1 & 0 & -1 & -1 & 1 & -1 & 1 & -1 & -1 \\
									1 & 1 & 1 & 0 & -1 & -1 & 1 & -1 & -1 & -1 \\
									-1 & 1 & 1 & 1 & 0 & -1 & -1 & -1 & 1 & -1 \\
									1 & 1 & -1 & 1 & 1 & 0 & -1 & -1 & -1 & -1 \\
									1 & -1 & 1 & -1 & 1 & 1 & 0 & -1 & 1 & -1 \\
									-1 & -1 & -1 & 1 & 1 & 1 & 1 & 0 & -1 & -1 \\
									1 & -1 & 1 & 1 & -1 & 1 & -1 & 1 & 0 & -1 \\
									1 & 1 & 1 & 1 & 1 & 1 & 1 & 1 & 1 & 0
						\end{array}
			\right)
\]
Curiously, $S$ has the maximum determinant among $\pm1$ matrices, but $S+I_n$ is not an EW matrix. This leads to the following questions.

\begin{question}
	Let $S$ be a skew Seidel matrix with the maximum determinant, does its corresponding tournament have the maximum number of diamonds ?
\end{question}

The answer to this question is positive in the following two cases:
\begin{enumerate}
\item There exists a skew conference matrix of order $n\equiv0\pmod{4}$.
\item There exists a skew EW matrix of order $n\equiv2\pmod{4}$.
\end{enumerate}
\begin{question}
		Let $n=4k+2$. Is there an infinite family of $n$-tournaments with $\frac{1}{96}n(n-3)(n-2)(n+2)$ diamonds, such that their Seidel adjacency matrices has a characteristic polynomial that is neither $(x^2 + 8k + 1)(x^2 + 4k-1)^{2k}$ nor $(x^2+1)(x^2+4k+3)^{2k}$.
\end{question}

\section{The case of $n\equiv1\pmod{4}$}	
			  
We start with following lemma.
\begin{lemma}
	Let $T=(V, A)$ be a tournament on $n+1 \equiv{2}\pmod{4}$ vertices with $\frac{1}{96}(n+1)(n-2)(n-1)(n+3)$ diamonds. 
	Then
	\[ \delta_{T-v} = \frac{1}{96}  {\left(n + 3\right)} {\left(n - 1\right)} {\left(n - 2\right)} {\left(n - 3\right)}
 \]
	for every $v\in V$.
\end{lemma}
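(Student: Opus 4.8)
The plan is to combine the structural consequence of maximality with Proposition~\ref{sumentries}. Since $n+1\equiv 2\pmod 4$, Theorem~\ref{max2} (read with $n+1$ in place of $n$) bounds the number of diamonds of an $(n+1)$-tournament by $\frac{1}{96}(n+1)(n-2)(n-1)(n+3)$, and the hypothesis says $T$ attains this bound. Hence, after relabelling the vertices of $T$ --- a permutation similarity of the Seidel matrix $S$, which changes none of the quantities below --- we may assume that the vertex set is partitioned into two blocks $B_1,B_2$ with $|B_1|=|B_2|=(n+1)/2$ such that $(S^2)_{ii}=-n$ for all $i$, $(S^2)_{ij}=\pm 2$ for distinct $i,j$ in a common block, and $(S^2)_{ij}=0$ for $i,j$ in different blocks.

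Fix a vertex $v$ and let $S'$ be the Seidel adjacency matrix of $T-v$, that is, $S$ with the row and column indexed by $v$ deleted. The computation rests on the identity $({S'}^{2})_{ij}=(S^2)_{ij}-S_{iv}S_{vj}=(S^2)_{ij}+S_{iv}S_{jv}$ for distinct $i,j\neq v$ (using $S^{t}=-S$), together with $({S'}^{2})_{ii}=1-n$. By Proposition~\ref{sumentries} applied to the $n$-tournament $T-v$, it suffices to evaluate $\Sigma:=\sum_{i<j}\bigl(({S'}^{2})_{ij}\bigr)^{2}$, the sum running over pairs of vertices of $T-v$. Expanding the square,
\[
\Sigma=\sum_{i<j}(S^2)_{ij}^{2}+2\sum_{i<j}(S^2)_{ij}S_{iv}S_{jv}+\sum_{i<j}(S_{iv}S_{jv})^{2}.
\]
The last sum is $\binom{n}{2}$. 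The first is $4$ times the number of within-block pairs that avoid $v$; whichever block contains $v$, this number is $\binom{(n-1)/2}{2}+\binom{(n+1)/2}{2}=\frac{(n-1)^2}{4}$, so the first sum equals $(n-1)^2$. For the middle sum, let $f=Se_v$ be the $v$-th column of $S$, so that $f_i=S_{iv}$ and $f_v=0$; then $\sum_{i,j}(S^2)_{ij}f_if_j=f^{t}S^2f=e_v^{t}S^{t}S^2Se_v=-(S^4)_{vv}$. Discarding the vanishing terms with $i=v$ or $j=v$, and subtracting the diagonal contribution $\sum_{i\neq v}(S^2)_{ii}f_i^{2}=-n^2$, we obtain $2\sum_{i<j}(S^2)_{ij}S_{iv}S_{jv}=n^2-(S^4)_{vv}$; and the block form gives $(S^4)_{vv}=\sum_k(S^2)_{vk}^{2}=n^2+2(n-1)$, so the middle sum equals $-2(n-1)$.

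Adding the three parts, $\Sigma=(n-1)^2-2(n-1)+\frac{n(n-1)}{2}=\frac{3}{2}(n-1)(n-2)$, and Proposition~\ref{sumentries} applied to $T-v$ then yields
\[
\delta_{T-v}=\frac{1}{96}n^2(n-1)(n-2)-\frac{1}{16}\cdot\frac{3}{2}(n-1)(n-2)=\frac{1}{96}(n-1)(n-2)(n^2-9)=\frac{1}{96}(n+3)(n-1)(n-2)(n-3),
\]
which is precisely the asserted value, and it does not depend on $v$. The only delicate point is the middle sum; routing it through $(S^4)_{vv}$ by means of $f=Se_v$ is what keeps the argument short, the rest being routine arithmetic. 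As a consistency check, a double count of the pairs consisting of a diamond of $T$ together with a vertex not among its four vertices gives $\sum_{v}\delta_{T-v}=(n-3)\,\delta_T$, so the average of the numbers $\delta_{T-v}$ is already forced to equal the target value; the computation above shows that each of them meets this average.
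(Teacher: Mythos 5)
Your proof is correct, and it takes a genuinely different route from the paper. The paper switches $T$ so that $v$ dominates all other vertices, determines the score sequence of $T'-v$ by adapting a lemma from an external reference (\cite{armario15}), computes $c_3(T'-v)$ from the scores via Equality (\ref{c3T}), and concludes with Lemma \ref{deltac3} in the form $\delta_{T'-v}=\delta_{T'}-c_3(T'-v)$. You instead stay entirely inside the matricial framework: you invoke the equality case of Theorem \ref{max2} to pin down the block structure of $S^2$, relate $({S'}^2)_{ij}$ to $(S^2)_{ij}+S_{iv}S_{jv}$ for the principal submatrix, and evaluate $\sum_{i<j}({S'}^2)_{ij}^2$ directly (the passage through $f^tS^2f=-(S^4)_{vv}$ is clean and I verified the arithmetic: $\Sigma=\tfrac{3}{2}(n-1)(n-2)$, giving $\tfrac{1}{96}(n-1)(n-2)(n^2-9)$, as claimed). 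What your approach buys is self-containedness --- it uses only Proposition \ref{sumentries} and Theorem \ref{max2}, both proved in the paper, rather than an ``adapt the proof of'' citation --- and it makes the independence of the answer from $v$ transparent through the symmetry of the two blocks. The paper's approach, in exchange, exposes the combinatorial content (the score vector of the vertex-deleted tournament), which is of independent interest. Your closing double count $\sum_v\delta_{T-v}=(n-3)\delta_T$ is a nice touch: it already forces the average of the $\delta_{T-v}$ to equal the target, so the matrix computation only needs to establish that the values are all equal.
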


\begin{proof}
		Let $T=(V,A)$ be a tournament on $n+1 = 4k+2$ vertices containing $\frac{1}{96}(n+1)(n-2)(n-1)(n+3)$ diamonds. 
		Fix a vertex of $v$ and let $T^{'}$ be the tournament switching equivalent to $T$ in which $v$ dominates the other vertices. 		
		
By adapting the proof of  \cite[Lemma~2.1]{armario15}, we show easily that the the score vector of $T^{'}-v$ is $2k$, $2k+1$ and $2k-1$, each
appearing $2k+1$ , $k$ and $k$ times, respectively.

By Equality (\ref{c3T}), the number of $3$-cycles in $T^{'}-v$ is

\[c_3(T^{'}-v) = \binom{4k+1}{3}- \left((2k+1)\binom{2k}{2}+k\binom{2k-1}{2}+k\binom{2k+1}{2}\right) \]

That is,

	\[
	c_3(T^{'}-v) =  \frac{1}{24}  {\left(n +3\right)} {\left(n - 1\right)} {\left(n - 2\right)}
  \]
	
	Since $v$ dominates every vertex in $V\setminus \{v\} $ in the tournament $T^{'}$, by Lemma \ref{deltac3}
	
	\[
		\delta_{T^{'}-v} = \delta_{T^{'}} - c_3(T^{'}-v)
	\]	
	
	Hence,
	
				\[ \delta_{T^{'}-v} = \frac{1}{96} \, {\left(n + 3\right)} {\left(n - 1\right)} {\left(n - 2\right)} {\left(n - 3\right)}
        \]
It follows that
\[\delta_{T-v} = \frac{1}{96} \, {\left(n + 3\right)} {\left(n - 1\right)} {\left(n - 2\right)} {\left(n - 3\right)}
 \]
because $T^{'}-v$ is switching equivalent to $T-v$.
\end{proof}

The previous lemma leads to the following conjecture.
\begin{conjecture}
  Let $T=(V, A)$ be a tournament on $n\equiv 1\pmod4$ vertices. Then, the number of diamonds in $T$ is at most 
	\[\frac{1}{96}  {\left(n + 3\right)} {\left(n - 1\right)} {\left(n - 2\right)} {\left(n - 3\right)}\]

\end{conjecture}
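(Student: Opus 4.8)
The plan is to convert the conjecture into a single inequality about the entries of $(S')^{2}$ for an auxiliary tournament $T'$ on $n-1$ vertices, to attack that inequality with the soft tools at hand, and — where these fail — to aim for a structural reduction to the case $n+1\equiv2\pmod4$ already treated in Theorem \ref{max2}.

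First I would normalize by switching. By Remark \ref{remsw} we may assume $T$ has a vertex $v$ dominating $V\setminus\{v\}$; set $m:=n-1\equiv0\pmod4$, let $T':=T-v$ with Seidel adjacency matrix $S'$, and write $(S')^{2}=(m'_{ij})$. By Lemma \ref{deltac3}, $\delta_T=\delta_{T'}+c_3(T')$. Applying Proposition \ref{sumentries} and Proposition \ref{cmij} to $T'$, adding, and completing the square in each summand of $\tfrac14\sum_{i<j}m'_{ij}-\tfrac1{16}\sum_{i<j}(m'_{ij})^{2}$, I get
\[
\delta_T=\frac{1}{96}\,m(m-1)(m-2)(m+4)+\frac14\binom{m}{2}-\frac{1}{16}\sum_{i<j}\bigl(m'_{ij}-2\bigr)^{2}.
\]
Since $\tfrac1{96}m(m-1)(m-2)(m+4)=\tfrac1{96}(n+3)(n-1)(n-2)(n-3)$, the conjecture is equivalent to $\sum_{i<j}(m'_{ij}-2)^{2}\ge 2m(m-1)$, i.e.\ (expanding) to $\sum_{i<j}(m'_{ij})^{2}\ge4\sum_{i<j}m'_{ij}$. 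By Equality (\ref{mij}) one has $m'_{ij}-2=2(\gamma'_{ij}-\tfrac m2)$ with $\gamma'_{ij}\in\mathbb Z$, so the target takes the clean form: for every $m$-tournament $T'$ with $m\equiv0\pmod4$,
\[
\sum_{i<j}\Bigl(\gamma'_{ij}-\tfrac m2\Bigr)^{2}\ \ge\ \binom{m}{2},
\]
with equality exactly when all $m'_{ij}=0$, i.e.\ when $S'$ is a skew-conference matrix — consistent with the Lemma preceding the conjecture.

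Next I would try to prove this inequality from general principles. Negative semidefiniteness of $(S')^{2}=-(S')^{t}S'$ gives $\sum_{i<j}m'_{ij}\le\binom m2$, and since each $2d^{+}_{T'}(i)-(m-1)$ is odd this improves to $\sum_{i<j}m'_{ij}\le\tfrac12 m(m-2)$, i.e.\ $\sum_{i<j}(\tfrac m2-\gamma'_{ij})\ge\tfrac{m^{2}}{4}$; as each $\gamma'_{ij}-\tfrac m2$ is an integer, $(\gamma'_{ij}-\tfrac m2)^{2}\ge|\gamma'_{ij}-\tfrac m2|$, hence $\sum_{i<j}(\gamma'_{ij}-\tfrac m2)^{2}\ge\tfrac{m^{2}}{4}$. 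A parity refinement via Equality (\ref{gamma}) shows $\gamma'_{ij}-\tfrac m2$ is odd whenever $d^{+}_{T'}(i)\equiv d^{+}_{T'}(j)\pmod2$, so the same-parity pairs already contribute at least their number $\binom c2+\binom{m-c}2\ge\tfrac14 m(m-2)$, where $c$ is the count of even-out-degree vertices. Both routes reach about $\tfrac14 m^{2}$, while the target is $\binom m2\approx\tfrac12 m^{2}$: the soft estimates are short by a constant factor, and I expect closing this gap to be the crux. They are tight simultaneously only for arrays $(m'_{ij})$ — for instance all different-parity entries equal to $2$ and almost all same-parity entries equal to $0$ — which the row-sum bound, or realizability as a square $-(S')^{t}S'$, must exclude; proving such arrays are never $(S')^{2}$ is exactly the realizability question that I do not see how to settle directly.

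The route I would therefore push is structural and parallels the rest of the paper: show that an $n$-tournament meeting the bound arises only by deleting a vertex from an $(n+1)$-tournament meeting the maximum of Theorem \ref{max2}, so that the bound and the classification of extremal tournaments would follow — conditionally, as elsewhere, on the existence of a skew-conference matrix of order $n+1$ — from the Lemma preceding the conjecture together with a converse to it. Concretely, using the decomposition $\delta_{T'}=\sum_{w}\bigl(c_3(T'[N^{+}_{T'}(w)])+c_3(T'[N^{-}_{T'}(w)])\bigr)$ (each diamond has a unique apex, lying above or below its $3$-cycle) and Equality (\ref{c3T}), I would track how the out-degree sequence of $T'$ and the pair-degrees $d^{\pm}_{T'}(i,j)$ must be arranged to keep $\sum_{i<j}(\gamma'_{ij}-\tfrac m2)^{2}$ near $\tfrac14 m^{2}$, and show that every near-extremal pattern is either unrealizable or forces $c\in\{0,m\}$ with $S'$ skew-conference. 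Completing this classification of near-extremal degree and pair-degree patterns is the step I cannot currently carry out, which is why the statement is recorded as a conjecture.
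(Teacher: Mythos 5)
This statement is recorded in the paper as a \emph{conjecture}: the paper itself offers no proof, only supporting evidence (the two lemmas of Section~4, which show that the bound is attained when a vertex is deleted from an extremal $(n+1)$-tournament of Theorem~\ref{max2}, or added above a skew-conference tournament). So there is no paper proof to compare against, and your proposal does not supply one either --- as you yourself state at the end. To be precise about what you do establish: the reduction is correct. Switching so that $v$ dominates $V\setminus\{v\}$, combining Lemma~\ref{deltac3} with Propositions~\ref{sumentries} and~\ref{cmij} applied to $T'=T-v$ on $m=n-1$ vertices, and completing the square does give
\[
\delta_T=\tfrac{1}{96}(n+3)(n-1)(n-2)(n-3)+\tfrac14\binom{m}{2}-\tfrac{1}{16}\sum_{i<j}(m'_{ij}-2)^2,
\]
so the conjecture is equivalent to $\sum_{i<j}(\gamma'_{ij}-\tfrac m2)^2\ge\binom m2$ for every $m$-tournament with $m\equiv0\pmod4$. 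That is a clean and potentially useful reformulation, in the spirit of Proposition~\ref{diamonds3mod4} and Theorem~\ref{max2}.

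The genuine gap is exactly where you place it. The two soft arguments (the negative-semidefiniteness bound $\mathbf 1^t(S')^2\mathbf 1\le -m$, and the parity count over same-out-degree-parity pairs) each yield only about $\tfrac{m^2}{4}$, half of the required $\binom m2$; neither the inequality nor the equality case is proved. Two smaller inaccuracies: your parenthetical claim that equality in $\sum_{i<j}(m'_{ij}-2)^2\ge 2m(m-1)$ holds ``exactly when all $m'_{ij}=0$'' is unjustified --- the sum is not bounded term by term (entries $m'_{ij}=2$ contribute $0$), so a priori other distributions of values in $\{\ldots,-2,0,2,4,6,\ldots\}$ could achieve equality, and ruling them out is part of what must be proved. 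And the proposed structural route (showing every extremal $n$-tournament extends to an extremal $(n+1)$-tournament) would, even if completed, only characterize the equality case; it would not by itself establish the upper bound for all tournaments. The proposal should therefore be read as a correct reduction plus partial estimates, not as a proof, which is consistent with the statement remaining a conjecture in the paper.
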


The following lemma gives another way to obtain $n$-tournaments with $\frac{1}{96}  {\left(n + 3\right)} {\left(n - 1\right)} {\left(n - 2\right)} {\left(n - 3\right)}$ diamonds. 
\begin{lemma}
	Let $T=(V, A)$ be a tournament on $n-1$ vertices such that its Seidel adjacency matrix is a skew-conference matrix . Let $T^{'}$ be a tournament obtained from $T$ by adding a vertex that dominates all vertices in $V$, then
		\[ 
			\delta_{T^{'}} = \frac{1}{96}  {\left(n + 3\right)} {\left(n - 1\right)} {\left(n - 2\right)} {\left(n - 3\right)}
	  \]

\end{lemma}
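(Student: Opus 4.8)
The plan is to use Lemma~\ref{deltac3} together with the explicit $3$-cycle count in Remark~\ref{c3reg} and the fact that deleting a vertex from $T^{'}$ returns a tournament with a skew-conference Seidel matrix. Since $T^{'}$ contains a vertex $v$ dominating all of $V$, Lemma~\ref{deltac3} gives $\delta_{T^{'}} = \delta_{T^{'}-v} + c_3(T^{'}-v) = \delta_T + c_3(T)$, where $T = T^{'}-v$ has $n-1$ vertices. So it suffices to compute $\delta_T$ and $c_3(T)$ separately.

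For $\delta_T$: the hypothesis is that the Seidel adjacency matrix $S$ of $T$ is a skew-conference matrix, i.e.\ $S S^t = (n-2)I_{n-1}$, so $S^2 = (2-n)I_{n-1}$ and all off-diagonal entries $m_{ij}$ of $S^2$ vanish. Plugging $m_{ij}=0$ and replacing $n$ by $n-1$ in Proposition~\ref{sumentries} yields $\delta_T = \tfrac{1}{96}(n-1)^2(n-2)(n-3)$. For $c_3(T)$: a skew-conference matrix of order $n-1$ exists only when $n-1 \equiv 0 \pmod 4$, hence $n-1$ is even; but more to the point, $S^2 = (2-n)I_{n-1}$ forces, via Equality~(\ref{mij}), that every $\gamma_{ij}$ is constant, which makes $T$ doubly regular up to switching — in particular $c_3(T)$ attains the regular value. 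Actually it is cleaner to invoke Proposition~\ref{cmij} directly: with $m_{ij}=0$ for all $i\neq j$ and $n$ replaced by $n-1$, we get $c_3(T) = \tfrac{1}{24}(n-1)(n-2)(n-3)$.

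Combining, $\delta_{T^{'}} = \tfrac{1}{96}(n-1)^2(n-2)(n-3) + \tfrac{1}{24}(n-1)(n-2)(n-3) = \tfrac{1}{96}(n-1)(n-2)(n-3)\bigl[(n-1) + 4\bigr] = \tfrac{1}{96}(n+3)(n-1)(n-2)(n-3)$, as claimed.

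The only subtlety — and it is minor — is checking that Lemma~\ref{deltac3} applies verbatim: $T^{'}$ is built by adding a vertex dominating all of $V$, so $v$ indeed dominates $V(T^{'})\setminus\{v\}$, and $T^{'}-v = T$ on the nose, so no switching is needed here. I expect no real obstacle; the whole argument is an arithmetic substitution into Propositions~\ref{sumentries} and~\ref{cmij} followed by factoring out $(n-1)(n-2)(n-3)$.
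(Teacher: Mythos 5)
Your proof is correct and follows essentially the same route as the paper: Lemma~\ref{deltac3} gives $\delta_{T^{'}}=\delta_T+c_3(T)$, and substituting $m_{ij}=0$ into Propositions~\ref{sumentries} and~\ref{cmij} (with $n$ replaced by $n-1$) yields the two summands. The only blemish is the discarded aside claiming $T$ is ``doubly regular up to switching''---that cannot hold since $T$ has $n-1\equiv 0\pmod 4$ vertices, hence even order, while regular tournaments have odd order---but since you abandon that line in favour of the direct substitution, the argument as finally given is sound.
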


\begin{proof}
			Let $T^{'}$ be a tournament obtained from $T$ by adding a vertex $v$ that dominates all vertices in $V$, the number of diamonds in the tournament $T^{'}$ is
								\[
										\delta_{T^{'}} = \delta_{T} + c_3(T)
									\]
		Let $S$ be the Seidel adjacency matrix of $T$. By Propositions \ref{sumentries} and \ref{cmij}, we have
						 \begin{eqnarray*}
									c_3(T)     & = & \frac{1}{24}(n-1)(n-2)(n-3) +\frac{1}{4}\sum_{i<j}{m_{ij}} 	\\
									\delta_{T} & = & \frac{1}{96}(n-1)^2(n-2)(n-3) - \frac{1}{16}\sum_{i<j}{m_{ij}^2}
			       \end{eqnarray*}
Since the non-diagonal entries of $S^2$ are equal to zero, then
				\[
				\delta_{T^{'}} = \frac{1}{96} \, {\left(n + 3\right)} {\left(n - 1\right)} {\left(n - 2\right)} {\left(n - 3\right)}
				\]
							
\end{proof}

\bibliography{bibpaper}

\end{document}